\date{}
\theoremstyle{plain}
\newtheorem{thm}{Theorem}
\theoremstyle{remark}
\newtheorem{rem}{Remark}
\DeclareMathOperator{\td}{d\mspace{-1mu}}
\begin{document}

\title[A class of logarithmically completely monotonic functions]
{More supplements to a class of logarithmically completely monotonic functions associated with the gamma function}

\author[S. Guo]{Senlin Guo}
\address[S. Guo]{Department of Mathematics, Zhongyuan University of Technology, Zhengzhou City, Henan Province, 450007, China}
\email{\href{mailto: S. Guo <sguo@hotmail.com>}{sguo@hotmail.com}, \href{mailto: S. Guo <senlinguo@gmail.com>}{senlinguo@gmail.com}}

\author[F. Qi]{Feng Qi}
\address[F. Qi]{Research Institute of Mathematical Inequality Theory, Henan Polytechnic University, Jiaozuo City, Henan Province, 454010, China}
\email{\href{mailto: F. Qi <qifeng618@gmail.com>}{qifeng618@gmail.com}, \href{mailto: F. Qi <qifeng618@hotmail.com>}{qifeng618@hotmail.com}, \href{mailto: F. Qi <qifeng618@qq.com>}{qifeng618@qq.com}}
\urladdr{\url{http://qifeng618.spaces.live.com}}

\begin{abstract}
In this article, a necessary and sufficient condition and a necessary condition are established for a function involving the gamma function to be logarithmically completely monotonic on $(0,\infty)$. As applications of the necessary and sufficient condition, some inequalities for bounding the psi and polygamma functions and the ratio of two gamma functions are derived.
\par
This is a continuator of the paper \cite{Guo-Qi-Srivastava2007-02.tex}.
\end{abstract}

\subjclass[2000]{Primary 33B15, 26A48, 26A51; Secondary 26D20, 65R10}

\keywords{Necessary and sufficient condition, logarithmically completely monotonic function, inequality, ratio, gamma function, psi function, polygamma function, application}

\thanks{The second author was supported partially by the China Scholarship Council}

\thanks{This paper was typeset using \AmS-\LaTeX}

\maketitle


\section{Introduction}

Recall \cite{Atanassov, minus-one-rgmia} that a positive function $f$ is said to be logarithmically completely monotonic on an interval $I$ if $f$ has derivatives  of all orders on $I$ and
\begin{equation}
(-1)^n[\ln f(x)]^{(n)}\ge0.
\end{equation}
This kind of functions has very closer relationships with the Laplace transforms, Stieltjes transforms and infinitely divisible completely monotonic functions. For more detailed information, please refer to \cite{CBerg, grin-ismail, Guo-Qi-Srivastava2007.tex, Guo-Qi-Srivastava2007-02.tex, sandor-gamma-2-ITSF.tex, compmon2, auscm} and related references therein.
\par
It is well-known that the classical Euler gamma function is defined for $x>0$ by
\begin{equation}\label{egamma}
\Gamma(z)=\int^\infty_0t^{x-1} e^{-t}\td t.
\end{equation}
The logarithmic derivative of $\Gamma(z)$, denoted by $\psi(z)=\frac{\Gamma'(z)}{\Gamma(z)}$, is called the psi function, and $\psi^{(k)}$ for $k\in\mathbb{N}$ are called the polygamma functions.
\par
For $\alpha\in\mathbb{R}$ and $\beta\ge0$, define
\begin{equation}\label{ffabrx}
f_{\alpha,\beta,\pm1}(x)=\biggl[\frac{e^x\Gamma(x+\beta)}{x^{x+\beta-\alpha}}\biggr]^{\pm1},\quad
x\in(0,\infty).
\end{equation}
The investigation of the function $f_{\alpha,\beta,\pm1}(x)$ has a long history. In what follows, we would like to give a short survey in the literature.
\par
In \cite[Theorem~1]{keckic}, for showing
\begin{equation}\label{kec}
\frac{b^{b-1}}{a^{a-1}} e^{a-b}<\frac{\Gamma(b)}{\Gamma(a)}
<\frac{b^{b-1/2}}{a^{a-1/2}} e^{a-b}
\end{equation}
for $b>a>1$, monotonic properties of the functions $\ln f_{\alpha,0,+1}(x)$ and $\ln f_{\alpha,0,+1}(x)$ on $(1,\infty)$ were obtained.
\par
In \cite[Theorem~2.1]{Muldoon-78}, the function $f_{\alpha,0,+1}(x)$ for $\alpha\le1$ was proved to be logarithmically completely monotonic on $(0,\infty)$. In~\cite[Theorem~2.1]{Ismail-Lorch-Muldoon}, the functions $f_{\alpha,0,+1}(x)$ and $f_{\alpha,0,-1}(x)$ were proved to be logarithmically completely monotonic on $(0,\infty)$ if and only if $\alpha\le\frac12$ and $\alpha\ge1$ respectively. These results were mentioned in \cite[Theorem~2.1]{Ismail-Muldoon-119} later. However, we do not think the proof in \cite{Ismail-Lorch-Muldoon} for the necessity is convincible.
\par
In~\cite[Theorem~3.2]{Anderson}, it was recovered that the function $f_{1/2,0,+1}(x)$ is decreasing and logarithmically convex from $(0,\infty)$ onto $\big(\sqrt{2\pi}\,,\infty\big)$ and that the function $f_{1,0,+1}(x)$ is increasing and logarithmically concave from $(0,\infty)$ onto $(1,\infty)$.
\par
In \cite[p.~376, Theorem~2]{psi-alzer}, it was presented that the function $f_{\alpha,0,+1}(x)$ is decreasing on $(c,\infty)$ for $c\ge0$ if and only if $\alpha\le\frac12$ and increasing on $(c,\infty)$ if and only if
\begin{equation}
\alpha\ge
\begin{cases}
c[\ln c-\psi(c)]&\text{if $c>0$},\\1&\text{if $c=0$}.
\end{cases}
\end{equation}
\par
The necessary and sufficient conditions for the functions $f_{\alpha,0,+1}(x)$ and $f_{\alpha,0,-1}(x)$ to be logarithmically completely monotonic on $(0,\infty)$, stated in \cite[Theorem~2.1]{Ismail-Lorch-Muldoon} and \cite[Theorem~2.1]{Ismail-Muldoon-119} and mentioned above, were recovered in~\cite[Theorem~2]{chen-qi-log-jmaa}. Moreover, the function $f_{\alpha,\beta,+1}(x)$ was proved in~\cite[Theorem~1]{chen-qi-log-jmaa} to be logarithmically completely monotonic on $(0,\infty)$ if $2\alpha \leq 1 \leq \beta$. Using monotonic properties of the functions $f_{1/2,0,+1}(x)$ and $f_{1,0,-1}(x)$, the inequality~\eqref{kec} was extended in~\cite[Remark~1]{chen-qi-log-jmaa} from $b>a>1$ to $b>a>0$.
\par
In \cite{chen-ASCM-07}, after proving once again the logarithmically completely monotonic property of the functions $f_{1/2,0,+1}(x)$ and $f_{1,0,-1}(x)$, in virtue of Jensen's inequality for convex functions, the upper and lower bounds were established: For positive numbers $x$ and $y$, the inequality
\begin{equation}\label{chen-gurland-ineq}
\frac{x^{x-1/2}y^{y-1/2}}{[(x+y)/2]^{x+y-1}} \le \frac{\Gamma(x)\Gamma(y)}{[\Gamma((x+y)/2)]^2} \le\frac{x^{x-1}y^{y-1}}{[(x+y)/2]^{x+y-2}}
\end{equation}
holds, where the middle term in \eqref{chen-gurland-ineq} is called Gurland's ratio \cite{Merkle-Gurland}. The left-hand side inequality in \eqref{chen-gurland-ineq} is same as the corresponding one in \cite[Theorem~1]{Merkle-Gurland}, but their upper bounds do not include each other.
\par
Recently, some new conclusions on logarithmically completely monotonic properties of the function $f_{\alpha,\beta,+1}(x)$ were procured in \cite[Theorem~1]{Guo-Qi-Srivastava2007-02.tex}:
\begin{enumerate}
\item
If $\beta\in(0,\infty)$ and $\alpha\le0$, then $f_{\alpha,\beta,+1}(x)$ is logarithmically completely monotonic on $(0,\infty)$;
\item
If $\beta\in(0,\infty)$ and $f_{\alpha,\beta,+1}(x)$ is a logarithmically completely monotonic function on $(0,\infty)$, then $\alpha\le\min\bigl\{\beta,\frac12\bigr\}$;
\item
If $\beta \ge 1$, then $f_{\alpha,\beta,+1}(x)$ is logarithmically completely monotonic on $(0,\infty)$ if and only if $\alpha\le\frac12$.
\end{enumerate}
As direct consequences of these monotonic properties, it is deduced immediately that if $x$ and $y$ are positive numbers with $x\ne y$, then
\begin{enumerate}
\item
the inequality
\begin{equation}\label{guo-neces-suff-ineq}
\frac{\Gamma(x+\beta)}{\Gamma(y+\beta)} <\frac{x^{x+\beta-\alpha}}{y^{y+\beta-\alpha}}e^{1/(y-x)}
\end{equation}
for $\beta\ge1$ and $x>y>0$ holds true if and only if $\alpha\le\frac12$;
\item
the inequality \eqref{guo-neces-suff-ineq} for $\beta\in(0,\infty)$ holds true also if $\alpha\le0$.
\end{enumerate}

In this paper, for continuing the work in the paper~\cite{Guo-Qi-Srivastava2007-02.tex}, we consider logarithmically completely monotonic properties of the function $f_{\alpha,\beta,-1}(x)$ on $(0,\infty)$.
\par
The main results of this paper are as follows.

\begin{thm}\label{fth1}
If the function $f_{\alpha,\beta,-1}(x)$ is logarithmically completely monotonic on $(0,\infty)$, then either $\beta>0$ and $\alpha\ge \max\bigl\{\beta,\frac12\bigr\}$ or $\beta=0$ and $\alpha\ge 1$.
\end{thm}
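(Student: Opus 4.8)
The plan is to read off the whole conclusion from just the \emph{first-order} requirement in the definition of logarithmic complete monotonicity --- namely that $\ln f_{\alpha,\beta,-1}$ be decreasing on $(0,\infty)$ --- by inspecting the sign of its derivative in the two limits $x\to\infty$ and $x\to0^{+}$.

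First I would note that for $\beta\ge0$ and $x>0$ one has $x+\beta>0$, hence $\Gamma(x+\beta)>0$, so
\[
f_{\alpha,\beta,-1}(x)=\frac{x^{x+\beta-\alpha}}{e^{x}\Gamma(x+\beta)}>0
\]
is smooth on $(0,\infty)$; taking logarithms and differentiating once,
\[
\ln f_{\alpha,\beta,-1}(x)=(x+\beta-\alpha)\ln x-x-\ln\Gamma(x+\beta),\qquad
[\ln f_{\alpha,\beta,-1}(x)]'=\ln x+\frac{\beta-\alpha}{x}-\psi(x+\beta).
\]
The hypothesis yields in particular $(-1)[\ln f_{\alpha,\beta,-1}(x)]'\ge0$, i.e.\ $[\ln f_{\alpha,\beta,-1}(x)]'\le0$ for all $x>0$; everything else is squeezed out of this single inequality.

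For $x\to\infty$ I would substitute the standard expansions $\psi(z)=\ln z-\frac{1}{2z}+O(z^{-2})$ and $\ln(x+\beta)=\ln x+\frac{\beta}{x}+O(x^{-2})$ to obtain
\[
[\ln f_{\alpha,\beta,-1}(x)]'=\frac{1-2\alpha}{2x}+O\!\left(\frac1{x^{2}}\right),
\]
so that $[\ln f_{\alpha,\beta,-1}(x)]'\le0$ near $\infty$ forces $1-2\alpha\le0$, i.e.\ $\alpha\ge\frac12$. For $x\to0^{+}$ I would split on $\beta$. If $\beta>0$, then $\psi(x+\beta)\to\psi(\beta)$ is finite and $x\ln x\to0$, so $x[\ln f_{\alpha,\beta,-1}(x)]'=(\beta-\alpha)+x\ln x-x\psi(\beta)+O(x^{2})\to\beta-\alpha$; since the left member stays $\le0$ near $0$, we must have $\beta-\alpha\le0$, i.e.\ $\alpha\ge\beta$, and together with $\alpha\ge\frac12$ this is exactly $\alpha\ge\max\{\beta,\frac12\}$. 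If $\beta=0$, I would use $\psi(x)=-\frac1x-\gamma+O(x)$ to get $x[\ln f_{\alpha,0,-1}(x)]'=(1-\alpha)+x\ln x+\gamma x+O(x^{2})\to1-\alpha$, whence $1-\alpha\le0$, i.e.\ $\alpha\ge1$. Assembling the two cases gives the stated dichotomy.

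No genuinely hard step is involved; the proof is a necessity statement obtained by elementary asymptotics. The only point that calls for a little care is to verify in each limit that the logarithmic term $\ln x$ and the various $O$-remainders are indeed dominated by the $1/x$-term whose sign is being tested, so that the sign of its coefficient is faithfully transmitted to $[\ln f_{\alpha,\beta,-1}(x)]'$. One could, if desired, use the higher-order monotonicity conditions to sharpen some of these to strict inequalities, but the weak inequalities in the statement already follow from the first-order condition alone.
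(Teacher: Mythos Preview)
Your argument is correct and follows essentially the same route as the paper: both proofs use only the first-order condition $[\ln f_{\alpha,\beta,-1}]'\le0$, extract $\alpha\ge\beta$ from the limit $x\to0^{+}$ (after multiplying by $x$) and $\alpha\ge\tfrac12$ from the asymptotic $\psi(z)=\ln z-\tfrac1{2z}+O(z^{-2})$ at $x\to\infty$. The only minor difference is in the case $\beta=0$: the paper observes the identity $f_{\alpha,0,-1}=f_{\alpha,1,-1}$ (coming from $\Gamma(x+1)=x\Gamma(x)$) and then invokes the already-proved $\beta>0$ result with $\beta=1$, whereas you handle it directly via the Laurent expansion $\psi(x)=-\tfrac1x-\gamma+O(x)$; both are equally short and valid.
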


\begin{thm}\label{fth3}
If $\beta\ge\frac12$, the necessary and sufficient condition for the function $f_{\alpha,\beta,-1}(x)$ to be logarithmically completely monotonic on $(0,\infty)$ is $\alpha\ge\beta$.
\end{thm}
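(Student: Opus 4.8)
The plan is to read the necessity off Theorem~\ref{fth1} and to prove the sufficiency by exhibiting $-[\ln f_{\alpha,\beta,-1}]'$ as the Laplace transform of an explicit, manifestly nonnegative function. For the necessity there is nothing new to do: if $f_{\alpha,\beta,-1}$ is logarithmically completely monotonic on $(0,\infty)$ and $\beta\ge\frac12$, then $\beta>0$, so Theorem~\ref{fth1} already gives $\alpha\ge\max\bigl\{\beta,\frac12\bigr\}$, and $\max\bigl\{\beta,\frac12\bigr\}=\beta$ because $\beta\ge\frac12$.

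For the sufficiency, assume $\alpha\ge\beta\ge\frac12$ and put $F(x)=\ln f_{\alpha,\beta,-1}(x)=(x+\beta-\alpha)\ln x-x-\ln\Gamma(x+\beta)$, a smooth function on $(0,\infty)$, so that $F'(x)=\ln x-\psi(x+\beta)+\frac{\beta-\alpha}{x}$. I would then substitute the classical integral representations $\ln x=\int_0^\infty\frac{e^{-t}-e^{-xt}}{t}\td t$, $\psi(y)=\int_0^\infty\bigl(\frac{e^{-t}}{t}-\frac{e^{-yt}}{1-e^{-t}}\bigr)\td t$ and $\frac1x=\int_0^\infty e^{-xt}\td t$, and combine these (individually convergent) integrals. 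The $\frac{e^{-t}}{t}$ contributions cancel and one is left with
$$
F'(x)=-\int_0^\infty\biggl[\frac1t-\frac{e^{-\beta t}}{1-e^{-t}}+(\alpha-\beta)\biggr]e^{-xt}\td t .
$$
Hence $-F'(x)=\int_0^\infty\bigl[h(t)+(\alpha-\beta)\bigr]e^{-xt}\td t$ with $h(t)=\frac1t-\frac{e^{-\beta t}}{1-e^{-t}}$, and it suffices to show the density $h(t)+(\alpha-\beta)$ is nonnegative on $(0,\infty)$; differentiating under the integral sign then yields $(-1)^nF^{(n)}(x)=\int_0^\infty\bigl[h(t)+(\alpha-\beta)\bigr]t^{\,n-1}e^{-xt}\td t\ge0$ for every $n\ge1$, which is exactly the logarithmic complete monotonicity of $f_{\alpha,\beta,-1}$.

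Since $\alpha-\beta\ge0$, what remains is $h(t)\ge0$ on $(0,\infty)$, equivalently $1-e^{-t}\ge t e^{-\beta t}$, equivalently $e^{\beta t}-e^{(\beta-1)t}\ge t$ for all $t>0$. For fixed $t>0$ the left-hand side increases in $\beta$, since its $\beta$-derivative equals $t e^{(\beta-1)t}(e^{t}-1)>0$; therefore for $\beta\ge\frac12$ it is at least its value at $\beta=\frac12$, namely $e^{t/2}-e^{-t/2}=2\sinh(t/2)$, and $2\sinh(t/2)\ge t$ by the elementary estimate $\sinh u\ge u$ for $u\ge0$. This proves $h\ge0$ and completes the argument.

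The proof is short precisely because the hard half, the necessity, is already contained in Theorem~\ref{fth1}. The only step requiring genuine care is the derivation of the integral representation of $F'$: one must check that each of the three classical integrals converges, that they may be added with the $\frac{e^{-t}}{t}$ terms cancelling, and that the resulting integrand remains finite as $t\to0^+$. After that the theorem reduces to the triviality $2\sinh(t/2)\ge t$.
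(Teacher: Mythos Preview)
Your proof is correct and follows essentially the same route as the paper: both express the relevant derivatives as Laplace transforms with the very same density $\alpha-\beta+\frac{1}{t}-\frac{e^{-\beta t}}{1-e^{-t}}$, and both reduce its nonnegativity to the elementary inequality $2\sinh(t/2)\ge t$ (which the paper writes in the equivalent form $\frac{t}{e^{t}-1}<e^{-t/2}$). Your organization is slightly more streamlined---by obtaining an integral representation of $F'$ directly (via the Frullani and digamma integrals) rather than of $F^{(n)}$ for $n\ge2$, you handle all $n\ge1$ at once and avoid the paper's separate asymptotic argument for the case $n=1$.
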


As the first application of Theorem~\ref{fth3}, the following inequalities are derived by using logarithmically completely monotonic properties of the function $f_{\alpha,\beta,\pm1}(x)$ on $(0,\infty)$.

\begin{thm}\label{kevic-type-ineq}
Let $\beta$ be a positive number.
\begin{enumerate}
\item
For $k\in\mathbb{N}$, double inequalities
\begin{equation}\label{qi-psi-ineq-1}
\ln x-\frac1x\le\psi(x)\le\ln x-\frac1{2x}
\end{equation}
and
\begin{equation}\label{qi-psi-ineq}
\frac{(k-1)!}{x^k}+\frac{k!}{2x^{k+1}}\le(-1)^{k+1}\psi^{(k)}(x)
\le\frac{(k-1)!}{x^k}+\frac{k!}{x^{k+1}}
\end{equation}
hold in $(0,\infty)$.
\item
When $\beta>0$, inequalities
\begin{equation}\label{beta>0-1}
\psi(x+\beta)\le \ln x+\frac{\beta}x
\end{equation}
and
\begin{equation}\label{beta>0-2}
(-1)^{k}\psi^{(k-1)}(x+\beta)\ge \frac{(k-2)!}{x^{k-1}}-\frac{\beta(k-1)!}{x^{k}}
\end{equation}
hold on $(0,\infty)$ for $k\ge2$.
\item
When $\beta\ge\frac12$, inequalities
\begin{equation}\label{beta>1/2-dou-ineq}
\psi(x+\beta)\ge \ln x\quad \text{and}\quad (-1)^{k}\psi^{(k-1)}(x+\beta)\le \frac{(k-2)!}{x^{k-1}}
\end{equation}
hold on $(0,\infty)$ for $k\ge2$.
\item
When $\beta\ge1$, inequalities
\begin{equation}\label{beta>1/2-dou-ineq=1}
\psi(x+\beta)\le \ln x+\frac{\beta-1/2}x
\end{equation}
and
\begin{equation}\label{beta>1/2-dou-ineq=2}
(-1)^{k}\psi^{(k-1)}(x+\beta)\ge \frac{(k-2)!}{x^{k-1}}-\frac{(\beta-1/2)(k-1)!}{x^{k}}
\end{equation}
holds on $(0,\infty)$ for $k\ge2$.
\end{enumerate}
\end{thm}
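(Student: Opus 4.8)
The plan is to read off every inequality in Theorem~\ref{kevic-type-ineq} from the sign information already encoded in the logarithmic complete monotonicity of the functions $f_{\alpha,\beta,\pm1}(x)$, so that no new analytic input is required. First I would record, for $x\in(0,\infty)$,
\[
\ln f_{\alpha,\beta,+1}(x)=x+\ln\Gamma(x+\beta)-(x+\beta-\alpha)\ln x ,
\]
differentiate once to get
\[
[\ln f_{\alpha,\beta,+1}(x)]'=\psi(x+\beta)-\ln x+\frac{\alpha-\beta}{x},
\]
and then differentiate $n-1$ more times ($n\ge2$) to get
\[
[\ln f_{\alpha,\beta,+1}(x)]^{(n)}=\psi^{(n-1)}(x+\beta)+(-1)^{n-1}\frac{(n-2)!}{x^{n-1}}+(-1)^{n-1}(\alpha-\beta)\frac{(n-1)!}{x^{n}} .
\]
Since $\ln f_{\alpha,\beta,-1}(x)=-\ln f_{\alpha,\beta,+1}(x)$, the function $f_{\alpha,\beta,+1}(x)$ (respectively $f_{\alpha,\beta,-1}(x)$) is logarithmically completely monotonic on $(0,\infty)$ exactly when $(-1)^n[\ln f_{\alpha,\beta,+1}(x)]^{(n)}\ge0$ (respectively $\le0$) for all $n\ge1$. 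Multiplying the two displays by $(-1)^n$ and rearranging then converts each such statement, in the case $n=1$, into the asserted bound on $\psi(x+\beta)$ and, in the cases $n\ge2$, into the asserted inequality for the polygamma function of order $n-1$.

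With this dictionary in hand, each item follows by substituting the extreme admissible value of $\alpha$. For item~(1) I take $\beta=0$ and use the classical facts, recalled in the Introduction, that $f_{1/2,0,+1}(x)$ and $f_{1,0,-1}(x)$ are logarithmically completely monotonic on $(0,\infty)$: the first delivers the upper bound in~\eqref{qi-psi-ineq-1} and the lower bound in~\eqref{qi-psi-ineq}, and the second delivers the remaining halves. For item~(2), with $\beta>0$ I put $\alpha=0$ and invoke the first assertion of~\cite[Theorem~1]{Guo-Qi-Srivastava2007-02.tex} (that $f_{0,\beta,+1}(x)$ is logarithmically completely monotonic), which yields~\eqref{beta>0-1} from the case $n=1$ and~\eqref{beta>0-2} from the cases $n\ge2$. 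For item~(3), with $\beta\ge\tfrac12$ I put $\alpha=\beta$ and apply Theorem~\ref{fth3} to $f_{\beta,\beta,-1}(x)$, which yields both inequalities in~\eqref{beta>1/2-dou-ineq}. For item~(4), with $\beta\ge1$ I put $\alpha=\tfrac12$ and use the third assertion of~\cite[Theorem~1]{Guo-Qi-Srivastava2007-02.tex} (that $f_{1/2,\beta,+1}(x)$ is logarithmically completely monotonic), which yields~\eqref{beta>1/2-dou-ineq=1} and~\eqref{beta>1/2-dou-ineq=2}.

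I expect the argument to be routine bookkeeping; the one place deserving care is the combinatorics of signs—one must track the factor $(-1)^{n-1}$ generated by differentiating $-\ln x$ and $(\alpha-\beta)/x$, the additional factor $(-1)^{n}$ coming from the definition of logarithmic complete monotonicity, and the reversal of the inequality when one passes from $f_{\alpha,\beta,+1}(x)$ to $f_{\alpha,\beta,-1}(x)$. A secondary point is to confirm that in each item the chosen value of $\alpha$ lies in the admissible range dictated by the results quoted above: $\alpha=0$ is admissible for $f_{\alpha,\beta,+1}(x)$ as soon as $\beta>0$, $\alpha=\tfrac12$ is admissible for $f_{\alpha,\beta,+1}(x)$ once $\beta\ge1$ (and is the critical value for $f_{\alpha,0,+1}(x)$), and $\alpha=\beta$ is admissible for $f_{\alpha,\beta,-1}(x)$ precisely when $\beta\ge\tfrac12$—in each case matching the hypothesis imposed in the corresponding item.
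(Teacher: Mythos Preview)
Your proposal is correct and follows essentially the same route as the paper: both arguments compute the derivatives of $\ln f_{\alpha,\beta,\pm1}(x)$, translate logarithmic complete monotonicity into the inequalities \eqref{reason-ineq-1}--\eqref{reason-ineq-2} (or their reverses), and then invoke Theorem~\ref{fth3} for item~(3), the first and third parts of \cite[Theorem~1]{Guo-Qi-Srivastava2007-02.tex} for items~(2) and~(4), and the classical $\beta=0$ results for item~(1). Your write-up is slightly more explicit in naming the extremal $\alpha$ used in each case, but the substance is identical.
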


As the second application of Theorem~\ref{fth3}, the following inequalities are derived by using logarithmically convex properties of the function $f_{\alpha,\beta,\pm1}(x)$ on $(0,\infty)$.

\begin{thm}\label{gurland-deriv-thm}
Let $n\in\mathbb{N}$, $x_k>0$ for $1\le k\le n$, $p_k\ge0$ satisfying $\sum_{k=1}^np_k=1$. If either $\beta>0$ and $\alpha\le0$ or $\beta\ge1$ and $\alpha\le\frac12$, then
\begin{equation}\label{n-gurland-ineq}
\frac{\prod_{k=1}^n[\Gamma(x_k+\beta)]^{p_k}}{\Gamma\bigl(\sum_{k=1}^np_kx_k+\beta\bigr)} \ge\frac{\prod_{k=1}^nx_k^{p_k(x_k+\beta-\alpha)}} {\bigl(\sum_{k=1}^np_kx_k\bigr)^{\sum_{k=1}^np_kx_k+\beta-\alpha}}.
\end{equation}
If $\alpha\ge\beta\ge\frac12$, then the inequality \eqref{n-gurland-ineq} reverses.
\end{thm}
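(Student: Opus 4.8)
The plan is to obtain \eqref{n-gurland-ineq} as a direct application of Jensen's inequality to the function $\ln f_{\alpha,\beta,+1}$, whose convexity or concavity is controlled by the second-order instance of the defining inequality for logarithmically completely monotonic functions. First I would record the explicit formula
\begin{equation*}
\ln f_{\alpha,\beta,+1}(x)=x+\ln\Gamma(x+\beta)-(x+\beta-\alpha)\ln x,\qquad x\in(0,\infty),
\end{equation*}
so that the sign of $[\ln f_{\alpha,\beta,+1}(x)]''$ decides whether this function is convex or concave on $(0,\infty)$.

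For the direct inequality, the two hypotheses ``$\beta>0$ and $\alpha\le0$'' and ``$\beta\ge1$ and $\alpha\le\frac12$'' are exactly the cases in which $f_{\alpha,\beta,+1}(x)$ is logarithmically completely monotonic on $(0,\infty)$ by \cite[Theorem~1]{Guo-Qi-Srivastava2007-02.tex}; taking $n=2$ in the defining inequality gives $[\ln f_{\alpha,\beta,+1}(x)]''\ge0$, that is, $g=\ln f_{\alpha,\beta,+1}$ is convex on $(0,\infty)$. Applying Jensen's inequality $g\bigl(\sum_{k=1}^np_kx_k\bigr)\le\sum_{k=1}^np_kg(x_k)$ and substituting the formula above, the linear terms $\sum_{k=1}^np_kx_k$ on the two sides cancel because $\sum_{k=1}^np_k=1$, and collecting the remaining terms and exponentiating produces precisely \eqref{n-gurland-ineq}.

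For the reversed inequality I would instead invoke Theorem~\ref{fth3}: when $\alpha\ge\beta\ge\frac12$ the function $f_{\alpha,\beta,-1}(x)$ is logarithmically completely monotonic on $(0,\infty)$, hence $[\ln f_{\alpha,\beta,-1}(x)]''\ge0$. Since $\ln f_{\alpha,\beta,-1}(x)=-\ln f_{\alpha,\beta,+1}(x)$, this forces $[\ln f_{\alpha,\beta,+1}(x)]''\le0$, so $\ln f_{\alpha,\beta,+1}$ is concave on $(0,\infty)$; Jensen's inequality then holds with the reversed sense and the same cancellation argument yields the reversed form of \eqref{n-gurland-ineq}.

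I do not anticipate a genuine obstacle: all the analytic content is already packaged in the logarithmic complete monotonicity statements from \cite{Guo-Qi-Srivastava2007-02.tex} and in Theorem~\ref{fth3}, and the proof is a short manipulation. The only points requiring care are the correct bookkeeping of the cancellation of the linear terms in Jensen's inequality and the sign tracking when passing from $f_{\alpha,\beta,\pm1}$ to $\ln f_{\alpha,\beta,\pm1}$; I would also note explicitly that only the $n=2$ consequence of logarithmic complete monotonicity, namely logarithmic convexity, is actually used here.
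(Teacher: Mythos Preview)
Your proposal is correct and follows essentially the same route as the paper: both proofs extract logarithmic convexity (respectively concavity) of $f_{\alpha,\beta,+1}$ from the logarithmically completely monotonic results in \cite[Theorem~1]{Guo-Qi-Srivastava2007-02.tex} and Theorem~\ref{fth3}, and then apply Jensen's inequality to $\ln f_{\alpha,\beta,+1}$ with the weights $p_k$. Your explicit remarks on the cancellation of the linear terms and on the passage from $f_{\alpha,\beta,-1}$ to $f_{\alpha,\beta,+1}$ via a sign flip are fine elaborations of steps the paper leaves implicit.
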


As the final application of Theorem~\ref{fth3}, the following inequality may be derived by using the decreasingly monotonic property of the function $f_{\alpha,\beta,-1}(x)$ on $(0,\infty)$.

\begin{thm}\label{gurland-deriv-thm-mon}
If $\alpha\ge\beta\ge\frac12$, then
\begin{equation}\label{guo-neces-suff-ineq}
I(x,y)<\biggl[\biggl(\frac{x}y\biggr)^{\alpha-\beta} \frac{\Gamma(x+\beta)}{\Gamma(y+\beta)}\biggr]^{1/(x-y)}
\end{equation}
holds true for $x,y\in(0,\infty)$ with $x\ne y$, where
\begin{equation}
I(a,b)=\frac 1e\biggl(\frac{b^b}{a^a}\biggr)^{1/(b-a)}
\end{equation}
for $a>0$ and $b>0$ with $a\ne b$ is the identric or exponential mean.
\end{thm}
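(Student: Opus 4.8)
The plan is to extract Theorem~\ref{gurland-deriv-thm-mon} from Theorem~\ref{fth3}: the hypothesis $\alpha\ge\beta\ge\frac12$ is precisely the condition under which Theorem~\ref{fth3} guarantees that $f_{\alpha,\beta,-1}$ is logarithmically completely monotonic on $(0,\infty)$, hence completely monotonic, hence decreasing there, and the rest is elementary manipulation together with the integral representation of the identric mean.

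First I would record that both sides of \eqref{guo-neces-suff-ineq} are symmetric in $x$ and $y$: indeed $I(x,y)=I(y,x)$, and interchanging $x$ with $y$ in $\bigl[(x/y)^{\alpha-\beta}\Gamma(x+\beta)/\Gamma(y+\beta)\bigr]^{1/(x-y)}$ inverts the base while negating the exponent, leaving the quantity unchanged; hence it suffices to treat $x>y>0$. Next I would upgrade ``decreasing'' to \emph{strictly} decreasing: since $f_{\alpha,\beta,-1}$ is logarithmically completely monotonic, the function $-\bigl[\ln f_{\alpha,\beta,-1}\bigr]'$ is completely monotonic and nonnegative, and it is not identically zero because $f_{\alpha,\beta,-1}$ is non-constant (immediate from its asymptotics, e.g.\ $\ln f_{\alpha,\beta,-1}(x)=(\tfrac12-\alpha)\ln x+O(1)$ as $x\to\infty$ by Stirling's formula); the rigidity of completely monotonic functions — a nonnegative one that vanishes at an interior point vanishes identically — then forces $\bigl[\ln f_{\alpha,\beta,-1}\bigr]'(t)<0$ for every $t\in(0,\infty)$.

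With this in hand, for $x>y>0$ the strict inequality $f_{\alpha,\beta,-1}(x)<f_{\alpha,\beta,-1}(y)$ reads
\[
\frac{x^{x+\beta-\alpha}}{e^{x}\,\Gamma(x+\beta)}<\frac{y^{y+\beta-\alpha}}{e^{y}\,\Gamma(y+\beta)} ;
\]
clearing denominators and using $x^{x+\beta-\alpha}/y^{y+\beta-\alpha}=(x^{x}/y^{y})\,(x/y)^{\beta-\alpha}$ rearranges this into $x^{x}/y^{y}<e^{x-y}(x/y)^{\alpha-\beta}\,\Gamma(x+\beta)/\Gamma(y+\beta)$. Raising both sides to the positive power $1/(x-y)$ and invoking $(x^{x}/y^{y})^{1/(x-y)}=e\,I(x,y)$ — which is just the definition of the identric mean — a cancellation of the factor $e$ gives exactly \eqref{guo-neces-suff-ineq}.

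I do not foresee a real obstacle; the one point needing a little care is the strictness, i.e.\ the passage from a weak to a strict inequality. Should one wish to avoid the completely-monotonic rigidity argument, an equivalent route is to integrate the pointwise estimate $\psi(t+\beta)+(\alpha-\beta)/t\ge\ln t$ — which is nothing but $\bigl[\ln f_{\alpha,\beta,-1}\bigr]'(t)\le0$ spelled out — over $[y,x]$; because the left-hand side is real-analytic and not identically equal to $\ln t$, this integral inequality is strict, and the identities $\int_{y}^{x}\ln t\,\td t=(x-y)\ln I(x,y)$ and $\int_{y}^{x}\bigl[\psi(t+\beta)+(\alpha-\beta)/t\bigr]\td t=\ln\bigl[(x/y)^{\alpha-\beta}\Gamma(x+\beta)/\Gamma(y+\beta)\bigr]$ convert it, after dividing by $x-y$, into \eqref{guo-neces-suff-ineq}.
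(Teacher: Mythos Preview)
Your proposal is correct and follows essentially the same route as the paper: invoke Theorem~\ref{fth3} to obtain that $f_{\alpha,\beta,-1}$ is decreasing on $(0,\infty)$, write out $f_{\alpha,\beta,-1}(x)<f_{\alpha,\beta,-1}(y)$ for $x>y>0$, and rearrange into \eqref{guo-neces-suff-ineq}. You are in fact more scrupulous than the paper, which neither records the symmetry in $x,y$ nor justifies the passage from weak to strict monotonicity; your rigidity argument (or the alternative integration argument) fills a small gap that the paper glosses over.
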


\begin{rem}
For $\beta\in\mathbb{R}$, let
\begin{equation}\label{guo-funct-g}
h_{\beta,\pm1}(x)=\biggl[\frac{e^x\Gamma(x+1)}{(x+\beta)^{x+\beta}}\biggr]^{\pm1}
\end{equation}
on $(\max\{0,-\beta\},\infty)$. In \cite{s-guo-ijpam, Guo-Qi-Srivastava2007.tex}, it was showed that the functions $h_{\beta,+1}(x)$ and $h_{\beta,-1}(x)$ are logarithmically completely monotonic if and only if $\beta\ge1$ and $\beta\le\frac12$ respectively. As consequences of monotonic results of the function $h_{\beta,\pm1}(x)$, the following two-sided inequality was derived in \cite{Guo-Qi-Srivastava2007.tex}:
\begin{equation}\label{guosl-ineq}
\frac{(x+1)^{x+1}}{(y+1)^{y+1}}\;e^{y-x}<\frac{\Gamma(x+1)}{\Gamma(y+1)} <\frac{(x+1/2)^{x+1/2}}{(y+1/2)^{y+1/2}}\;e^{y-x}
\end{equation}
for $y>x>0$, where the constants $1$ and $\frac12$ in the very left and the very right sides of the two-sided inequality \eqref{guosl-ineq} cannot be replaced, respectively, by smaller and larger numbers.
\end{rem}

\begin{rem}
In \cite{note-on-li-chen.tex}, it was showed that the function
\begin{equation}
h(x)=\frac{e^x\Gamma(x)}{x^{x[1-\ln x+\psi(x)]}}
\end{equation}
on $(0,\infty)$ has a unique maximum $e$ at $x=1$, with the following two limits
\begin{equation}\label{2limits}
\begin{aligned}
\lim_{x\to0^+}h(x)&=1 & \text{and} &&\lim_{x\to\infty}h(x)&=\sqrt{2\pi}\,.
\end{aligned}
\end{equation}
As consequences of the monotonicity of $h(x)$, it was concluded in \cite{note-on-li-chen.tex} that the following inequality
\begin{equation}\label{note-li-chen-ineq}
\frac{x^{x[\ln x-\psi(x)-1]}}{y^{y[\ln y-\psi(y)-1]}} e^{y-x} <\frac{\Gamma(y)}{\Gamma(x)}
\end{equation}
holds true for $y>x\ge1$. If $0<x<y\le1$, the inequality \eqref{note-li-chen-ineq} is reversed.
\end{rem}

\begin{rem}
It is worthwhile to point out that \cite[Thorem~1.3]{sandor-gamma-2-ITSF.tex} is equivalent to necessary and sufficient conditions in \cite[Theorem~2]{chen-qi-log-jmaa} and \cite[Theorem~2.1]{Ismail-Lorch-Muldoon} for the functions $f_{1/2,0,+1}(x)$ and $f_{1,0,-1}(x)$ to be logarithmically completely monotonic on $(0,\infty)$. See also \cite{Guo-Qi-Srivastava2007.tex, Guo-Qi-Srivastava2007-02.tex} and related references therein.
\end{rem}

\section{Proofs of main results}

Now we are in a position to prove our theorems.

\begin{proof}[Proof of Theorem \ref{fth1}]
Suppose that $f_{\alpha,\beta,-1}(x)$ is logarithmically completely monotonic on $(0,\infty)$. Then
\begin{equation}\label{f500}
[\ln f_{\alpha,\beta,-1}(x)(x)]'=\ln x-\psi(x+\beta)+\frac{\beta-\alpha}x \le0,
\end{equation}
from which we have
\begin{equation}\label{f505}
\beta-\alpha \le x[\psi(x+\beta)-\ln x], \quad x\in(0,\infty).
\end{equation}
If $\beta>0$, then
\begin{equation*}\label{f510}
\beta-\alpha \le \lim_{x\to 0^+}[x\psi(x+\beta)-x\ln x]=0.
\end{equation*}
That is
\begin{equation}\label{f515}
\alpha \ge \beta.
\end{equation}
\par
Using the asymptotic formula
\begin{equation}\label{flm22}
\psi(x)=\ln x-\frac1{2x}+O\bigg(\frac1{x^2}\bigg), \quad x\to \infty,
\end{equation}
see \cite[p.~47]{er}, in \eqref{f505} for $\beta>0$, we obtain
\begin{align*}
\beta-\alpha&\le\lim_{x\to\infty}x\biggl[\ln(x+\beta)-\frac{1}{2(x+\beta)} +O\biggl(\frac{1}{x^2}\biggr)-\ln x\biggr]\\
\quad &=\lim_{x\to \infty}\biggl[x\ln\biggl(1+\frac{\beta}x\biggr)\biggr]-\frac{1}{2}\\
\quad &=\beta\lim_{x\to \infty}\biggl[\frac{x}{\beta} \ln\biggl(1+\frac{\beta}x\biggr)\biggr]-\frac{1}{2}\\
\quad &=\beta -\frac{1}{2},
\end{align*}
from which we get
\begin{equation}\label{f530}
\alpha \ge \frac{1}{2}.
\end{equation}
Combining \eqref{f515} and \eqref{f530} yields
\begin{equation}\label{f550}
\alpha \ge \max\biggl\{\beta,\frac12\biggr\} \quad \text{if} \quad \beta>0.
\end{equation}
\par
If $\beta=0$, considering $f_{\alpha,0,-1}(x)=f_{\alpha,1,-1}(x)$ and \eqref{f550} yields $\alpha\ge\max\bigl\{1,\frac12\bigr\}=1$. The proof is complete.
\end{proof}

\begin{proof}[Proof of Theorem \ref{fth3}]
By Theorem \ref{fth1}, the necessary condition is obtained readily.
\par
Differentiating \eqref{f500} and making use of
\begin{equation*}
\psi ^{(n)}(x)=(-1)^{n+1}\int_{0}^{\infty}\frac{t^{n}}
{1-e^{-t}}e^{-xt}\td t,\quad x\in (0,\infty)
\end{equation*}
and
\begin{equation*}
\frac1{x^n}=\frac1{\Gamma(n)}\int_0^\infty t^{n-1}e^{-xt}\td t,\quad x\in (0,\infty),
\end{equation*}
see \cite[p.~884]{grads}, gives
\begin{gather}
(-1)^{n}[\ln f_{\alpha,\beta,-1}(x)]^{(n)} =\frac{(n-2)!}{x^{n-1}}-(-1)^{n}\psi^{(n-1)}(x+\beta)-\frac{(\beta-\alpha)(n-1)!}{x^{n}}\notag\\
\begin{aligned}\label{f595}
&=\int_{0}^{\infty}t^{n-2}e^{-xt}\td t-\int_{0}^{\infty}\frac{t^{n-1}}{1-e^{-t}}e^{-(x+\beta)t}\td t
-(\beta-\alpha)\int_{0}^{\infty}t^{n-1}e^{-xt}\td t\\
&=\int_{0}^{\infty} \bigg[\alpha-\beta-\frac1t\bigg(e^{(1-\beta)t}\frac{t}{e^t-1}-1\bigg)\bigg] {t^{n-1}e^{-xt}}\td t
\end{aligned}
\end{gather}
for $n\ge2$. The inequality
\begin{equation}\label{f800}
\frac{t}{e^t-1}< \frac{1}{e^{t/2}}, \quad t\in(0,\infty)
\end{equation}
was ever used in \cite{mathieu-rostock, mathieu-rostock-rgmia, mathieu-ijmms}. Substituting it into \eqref{f595} leads to
\begin{equation*}\label{f801}
(-1)^{n}[\ln f_{\alpha,\beta,-1}(x)]^{(n)}\ge \int_{0}^{\infty}
\bigg[\alpha-\beta-\frac{e^{(1/2-\beta)
t}-1}{t}\bigg]t^{n-1}e^{-xt}\td t,\quad n\ge2.
\end{equation*}
Since the function
\begin{equation}
\frac{e^{(1/2-\beta)t}-1}t,\quad t\in(0,\infty)
\end{equation}
is increasing, if $\alpha \ge \beta \ge \frac12$, then
\begin{equation}\label{f806}
(-1)^{n}[\ln f_{\alpha,\beta,-1}(x)]^{(n)}\ge (\alpha-\beta)
\int_{0}^{\infty} t^{n-1}e^{-xt}\td t \ge 0,\quad n\ge2.
\end{equation}
\par
By using \eqref{flm22}, it follows that
\begin{equation*}
[\ln f_{\alpha,\beta,-1}(x)(x)]'=\ln\biggl(1+\frac{\beta}{x}\biggr) -\frac{1}{2(x+\beta)}+\frac{\alpha-\beta}{x}+O\biggl(\frac{1}{x^2}\biggr)
\end{equation*}
as $x\to\infty$, thus for all $\alpha$ and $\beta$,
\begin{equation}\label{f718}
\lim_{x\to\infty}[\ln f_{\alpha,\beta,-1}(x)(x)]'=0.
\end{equation}
From \eqref{f718} and \eqref{f806}, we have
\begin{equation*}
[\ln f_{\alpha,\beta,-1}(x)]' \le 0 \quad \text{if $\alpha\ge\beta\ge\frac12$}.
\end{equation*}
Thus $(-1)^{n}[\ln f_{\alpha,\beta,-1}(x)]^{(n)}\ge 0$ are valid for all $n\in\mathbb{N}$. The proof is complete.
\end{proof}

\begin{proof}[Proof of Theorem~\ref{kevic-type-ineq}]
If $f_{\alpha,\beta,-1}(x)$ is logarithmically completely monotonic on $(0,\infty)$, then
$$
(-1)^k[\ln f_{\alpha,\beta,-1}(x)]^{(k)}\ge0
$$
on $(0,\infty)$ for $k\in\mathbb{N}$, which is equivalent to
\begin{equation}\label{reason-ineq-1}
\psi(x+\beta)\ge \ln x+\frac{\beta-\alpha}x
\end{equation}
and, for $k\ge2$,
\begin{equation}\label{reason-ineq-2}
(-1)^{k}\psi^{(k-1)}(x+\beta)\le \frac{(k-2)!}{x^{k-1}}-\frac{(\beta-\alpha)(k-1)!}{x^{k}}.
\end{equation}
Hence, Theorem~\ref{fth3} implies inequalities in \eqref{beta>1/2-dou-ineq}.
\par
If $\beta \ge 1$, Theorem~1 in \cite{Guo-Qi-Srivastava2007-02.tex} said that the function $f_{\alpha,\beta,+1}(x)$ is logarithmically completely monotonic on $(0,\infty)$ if and only if $\alpha\le\frac12$, this means that inequalities in \eqref{reason-ineq-1} and \eqref{reason-ineq-2} are reversed, and so inequalities in \eqref{beta>1/2-dou-ineq=1} and \eqref{beta>1/2-dou-ineq=2} are valid.
\par
If $\beta>0$ and $\alpha\le0$, Theorem~1 in \cite{Guo-Qi-Srivastava2007-02.tex} also said that the function $f_{\alpha,\beta,+1}(x)$ is logarithmically completely monotonic on $(0,\infty)$, this means that inequalities in \eqref{reason-ineq-1} and \eqref{reason-ineq-2} are also reversed, and so inequalities \eqref{beta>0-1} and \eqref{beta>0-2} are valid.
\par
When $\beta=0$, several mathematicians have proved that the functions $f_{\alpha,0,+1}(x)$ and $f_{\alpha,0,-1}(x)$ are logarithmically completely monotonic on $(0,\infty)$ if and only if $\alpha\le\frac12$ and $\alpha\ge1$ respectively, which implies by reasoning as above the double inequalities \eqref{qi-psi-ineq-1} and \eqref{qi-psi-ineq}. The proof of Theorem~\ref{kevic-type-ineq} is complete.
\end{proof}

\begin{proof}[Proof of Theorem~\ref{gurland-deriv-thm}]
The first conclusion in \cite[Theorem~1]{Guo-Qi-Srivastava2007-02.tex} implies that the function $f_{\alpha,\beta,+1}(x)$ is logarithmically convex for $\beta>0$ and $\alpha\le0$ on $(0,\infty)$. Combining this with Jensen's inequality for convex functions yields
\begin{equation}\label{jensen-discreate}
\ln\frac{\exp\bigl(\sum_{k=1}^np_kx_k\bigr)\Gamma\bigl(\sum_{k=1}^np_kx_k+\beta\bigr)} {\bigl(\sum_{k=1}^np_kx_k\bigr)^{\sum_{k=1}^np_kx_k+\beta-\alpha}} \le\sum_{k=1}^np_k\ln\frac{\exp(x_k)\Gamma(x_k+\beta)}{x_k^{x_k+\beta-\alpha}},
\end{equation}
where $n\in\mathbb{N}$, $x_k>0$ for $1\le k\le n$, $p_k\ge0$ satisfying $\sum_{k=1}^np_k=1$, $\beta>0$ and $\alpha\le0$. Rearranging it leads to the inequality \eqref{n-gurland-ineq}.
\par
The third conclusion in \cite[Theorem~1]{Guo-Qi-Srivastava2007-02.tex} implies that the function $f_{\alpha,\beta,+1}(x)$ is also logarithmically convex for $\beta\ge1$ and $\alpha\le\frac12$ on $(0,\infty)$, hence, the inequality~\eqref{jensen-discreate} is also valid for $\beta\ge1$ and $\alpha\le\frac12$.
\par
Theorem~\ref{fth3} above implies that the function $f_{\alpha,\beta,+1}(x)$ is logarithmically concave for $\alpha\ge\beta\ge\frac12$ on $(0,\infty)$, therefore, the inequality~\eqref{jensen-discreate} is reversed. Theorem~\ref{gurland-deriv-thm} is proved.
\end{proof}

\begin{proof}[Proof of Theorem~\ref{gurland-deriv-thm-mon}]
Theorem~\ref{fth3} implies that the function $f_{\alpha,\beta,-1}(x)$ is decreasing on $(0,\infty)$ if $\alpha\ge\beta\ge\frac12$, this is,
\begin{equation*}
\frac{e^y\Gamma(y+\beta)}{y^{y+\beta-\alpha}}>\frac{e^x\Gamma(x+\beta)}{x^{x+\beta-\alpha}}
\end{equation*}
for $y>x>0$, which can be rearranged as
\begin{gather*}
\frac{\Gamma(y+\beta)}{\Gamma(x+\beta)}>e^{x-y}\frac{y^{y+\beta-\alpha}}{x^{x+\beta-\alpha}},\\
\biggl[\biggl(\frac{y}{x}\biggr)^{\alpha-\beta}\frac{\Gamma(y+\beta)}{\Gamma(x+\beta)}\biggr]^{1/(y-x)} >\frac1e\biggl(\frac{y^{y}}{x^{x}}\biggr)^{1/(y-x)}.
\end{gather*}
The proof of Theorem~\ref{gurland-deriv-thm-mon} is complete.
\end{proof}

\end{document}